\documentclass[amssymb,twoside,12pt]{article}
\thispagestyle{plain}\pagestyle{myheadings}
\markboth{\small{}}{\small{}}
\usepackage{latexsym,amsmath,graphicx,mathrsfs,amssymb}
\usepackage{amsfonts}
\usepackage{enumitem}
\topmargin -2.5cm \evensidemargin 0.25cm \oddsidemargin 0.25cm
\textheight 10.0 in \textwidth 6.7in
\newtheorem{theorem}{Theorem}[section]
\newtheorem{lemma}[theorem]{{\bf Lemma}}

\newtheorem{rem}[theorem]{{\bf Remark}}

\newtheorem{definition}{Definition}[section]
\numberwithin{equation}{section}
\newenvironment{proof}{\indent{\em Proof:}}{\quad \hfill
$\Box$\vspace*{2ex}}

\setlength{\parindent}{0.25in} \setlength{\parskip}{0.035in}
\begin{document}
\setcounter{page}{1}
\begin{center}
\vspace{0.3cm} {\large{\bf Ulam-Hyers stabilities of mild solutions of the fractional nonlinear abstract Cauchy problem}} \\
\vspace{0.4cm}
 J. Vanterler da C. Sousa $^1$ \\
vanterler@ime.unicamp.br  \\

\vspace{0.30cm}
K. D. Kucche $^2$\\
kdkucche@gmail.com\\

\vspace{0.30cm}
E. Capelas de Oliveira $^3$\\
capelas@ime.unicamp.br\\
\vspace{0.35cm}

\vspace{0.30cm}
$^{1,3}$ Department of Applied Mathematics, Imecc-Unicamp,\\ 13083-859, Campinas, SP, Brazil.\\

\vspace{0.30cm}
{$^{2}$ Department of Mathematics, Shivaji University, Kolhapur,\\ Maharashtra 416 004, India}
\end{center}

\def\baselinestretch{1.0}\small\normalsize
\begin{abstract}
Since the main work on Ulam-Hyers dependable stabilities of differential equations to date, numerous significant and applicable papers have been published, both in the sense of integer order and fractional order differential equations. However, when we enter the field of fractional differential equations, the path that is still long to be traveled, although there is a range of published works. In this sense, in this paper, we will investigate the Ulam--Hyers and Ulam--Hyers--Rassias stabilities of mild solutions of the fractional nonlinear abstract Cauchy problem on the intervals $[0,T]$ and $[0,\infty)$, by means of Banach fixed point theorem.
\end{abstract}
\noindent\textbf{Key words:} Fractional nonlinear abstract Cauchy, Ulam-Hyers stabilities, mild solution, Banach fixed point theorem. \\
\noindent
\textbf{2010 Mathematics Subject Classification:}   26A33, 34G25, 34A12.
\allowdisplaybreaks

\section{Introduction}

One of the most dynamic subjects of differential equations has been the stability theory of Ulam-Hyers. The theme came in 1940 by Ulam in a lecture on unresolved issues at the University of Wisconsin \cite{16,17}. The issue raised by Ulam was partially answered the following year by Hyers in the case of the Banach spaces. Thus, the theory of stabilities, came to be called Ulam-Hyers. However, in 1978 \cite{21}, Rassias introduced a generalization of the version exhibited by Hyers. In this sense, due to this breakthrough and novelty in mathematical analysis, numerous specialists have researched  the stability of solutions of functional differential equations. The idea of Ulam-Hyers stability for functional equations, is the substitution of the functional equation for a given inequality that acts as a perturbation of the equation. We recommend a few monographs and papers that permit a progressively careful study of the subjects
 \cite{akkouchi,18,19,20}.

With the beginning of the fractional calculus and over the years his theory being well consolidated and grounded, many researchers began to look in a different way for the area, especially researchers working with differential equations \cite{ZE1,ZE23,ZE3,zhou,yang,kilbas,samko}. In this sense, today it is more than proven that investigating and analyzing certain physical problems, through fractional derivatives, ensures more accurate and consistent results with reality. On the other hand, moving to a more theoretical side, investigating the existence, uniqueness and Ulam-Hyers stability of solution of fractional differential equations has gained increasing prominence in the scientific community, although there are a range of works, the theory is still being built with good results \cite{wang222,wangnew,wangclass,wanglinear}.

In 2012, Wang and Zhou \cite{wang222} in their work, investigated several kind of stabilities of the mild solution stability of the fractional evolution equation in Banach space, namely: Mittag-Leffler-Ulam stability, Mittag-Leffler-Ulam- Leffler-Ulam-Hyers stability, Mittag-Leffler-Ulam-Hyers - Rassias stability and generalized Mittag-Leffler-Ulam-Hyers-Rassias stability. In 2014, Abbas \cite{abbas1} investigated the existence, uniqueness, and stability of the mild solution of the integrodifferential equation with nonlocal conditions through Holder's inequality, Schauder's fixed point theorem, and Gronwall's inequality in Banach space. Other work can be found in the references of the two papers themselves. On the other hand, Zhou and Jiao \cite{zhou}, using fractional operators and some fixed point theorems, investigated the existence and uniqueness of mild solutions of fractional neutral evolution equations and made some applications in order to elucidate the obtained results. In this sense, Saadati et. al. \cite{saadati}, presented results on the existence of mild solutions for fractional abstract equations with non-instantaneous impulses. In order to obtain such results, the authors used non-compactness measure and the Darbo-Sadovskii and Tichonov fixed point theorems . For a more in-depth reading, we suggest some papers \cite{yang,sousa2,dabas1,jawahdou,balac,olszowy,chen}.

Although we are faced with a significant amount of work dealing with solution properties of fractional differential equations, there is still much work to be done. In order to propose new results and provide new materials on Ulam-Hyers stability and to contribute positively to the area, the present paper has as main objective to investigate the Ulam-Hyers stabilities on the intervals $[0,T]$ and $[0,\infty)$.

So let's consider the fractional nonlinear abstract Cauchy problem given by
\begin{equation}\label{CP}
\left\{
\begin{array}{rll}
\displaystyle {}^{H}{\mathbb{D}}_{0^{+}}^{\alpha,\beta} \xi(t) & = & \mathcal{A} \xi(t) + u(t) \mathcal{H}(t,\xi(t)), ~t \in I\\
I_{0^{+}}^{1-\gamma} \xi(0) & = & \xi_0 
\end{array} \right.
\end{equation}
where ${}^{H}{\mathbb{D}}_{0^{+}}^{\alpha,\beta} (\cdot)$ is the Hilfer fractional derivative of order $0 < \alpha \leq 1$ and type $0 \leq \beta \leq 1$, $\gamma=\alpha+\beta-\alpha \beta$,  $I=[0,T]$ or $[0,\infty)$, $\xi \in C(I,\Omega)$, $\Omega:=(\Omega,\|\cdot\|)$ is a Banach space, $t \in I$, $ \mathcal{A}:\Omega \rightarrow \Omega$ is the infinitesimal generator of a $C_0$-semigroup $(\mathbb{S}(t))_{t \geq 0}$ and $\mathcal{H}: I \times \Omega \rightarrow \Omega$ is a given continuous function.

We highlight below the main points that motivated us to investigate the mild solution stability for the fractional abstract Cauchy problem:
\begin{enumerate}
    \item A new class of Ulam-Hyers type stabilities for the fractional abstract Cauchy problem;
    
    \item At the limit of $\beta\rightarrow 1$ in the mild solution of the abstract Cauchy problem with $0 <\alpha <1$, we have a sub-class of Ulam-Hyers stabilities for the Riemann-Liouville fractional derivative;
    
    \item At the limit of $\beta\rightarrow 0$ in the mild solution of the abstract Cauchy problem with $0 <\alpha <1$, we have a sub-class of Ulam-Hyers stabilities for the fractional derivative of Caputo;
    
    \item When $\alpha=1$, we have as particular case, the integer version;
    
    \item An important consequence of the obtained results are the possible future applications through the Ulam-Hyers stabilities in engineering, biology and especially in mathematics;
    \end{enumerate}

The paper is organized as follows. In section 2, we introduce the $\psi$-Riemann-Liouville fractional integral, the $\psi$-Hilfer fractional derivative and fundamental concept of the operator $(\alpha,\beta)$- resolvent. In this sense, it is presented the mild solution of the fractional Cauchy problem as well as the Ulam-Hyers stability. In section 3, it is directed to the first result of this paper, that is, we investigate the Ulam-Hyers and Ulam-Hyers-Rassias stabilities in the $[0, T]$ range and discuss some particular cases. In section 4, we discuss the Ulam-Hyers and Ulam-Hyers-Rassias stabilities in the interval $[0,\infty)$. Concluding remarks close the paper.


\section{Preliminaries}

In this section, we will introduce some important definitions and results in order to assist in the development of this paper.

Let $T > 0$ be a given positive real number. The weighted space of continuous functions $\xi \in I'=(0,T]$ is given by \cite{sousa21}
{
\begin{equation*}
C_{1-\gamma}(I,\Omega)= \left\{ \xi \in C(I',\Omega), \, t^{1-\gamma} \xi(t) \in C(I,\Omega) \right\} 
\end{equation*}
where $0 < \gamma \leq 1$, with norm

\begin{equation*}
\begin{array}{rll}
||\xi||_{C_{1-\gamma}} & = & \displaystyle \sup_{t \in I} ||t^{1-\gamma}\xi(t)||
\end{array}
\end{equation*}
and
\begin{equation*}
\begin{array}{rll}
||\xi- \phi||_{C_{1-\gamma}} & = &  {\rm{d}}_{1-\gamma} (\xi,\phi) :=\displaystyle \sup_{t \in I} ||t^{1-\gamma}(\xi(t)-\phi(t))|| \cdot
\end{array}
\end{equation*}

Let $\left( a,b\right) $ $\left( -\infty \leq a<b\leq \infty \right) $ be a finite interval {\rm{(or infinite)}} of the real line $\mathbb{R}$ and let $\alpha >0$. Also let $\psi \left( x\right) $ be an increasing and positive monotone function on $\left( a,b\right] ,$ having a continuous derivative $\psi ^{\prime }\left( x\right)$ {\rm{(we denote first derivative as $\dfrac{d}{dx}\psi(x)=\psi'(x)$)}} on $\left( a,b\right) $. The left-sided fractional integral of a function $f$ with respect to a function $\psi $ on $ \left[ a,b\right] $ is defined by \cite{ZE1,sousa21}
\begin{equation}\label{eq7}
\mathcal{I}_{a+}^{\alpha ;\psi }f\left( x\right) =\frac{1}{\Gamma \left( \alpha
\right) }\int_{a}^{x}\psi ^{\prime }\left( s\right) \left( \psi \left(
x\right) -\psi \left( s\right) \right) ^{\alpha -1}f\left( s\right) ds.
\end{equation}

On the other hand, let $n-1<\alpha <n$ with $n\in \mathbb{N},$ let $J=\left[ a,b\right] $ be an interval such that $-\infty \leq a<b\leq \infty $ and let $f,\psi \in C^{n}\left[ a,b\right] $ be two functions such that $\psi $ is increasing and $\psi ^{\prime }\left( x\right) \neq 0,$ for all $x\in J$. The left-sided $\psi -$Hilfer fractional derivative $^{H}\mathbb{D}_{a+}^{\alpha ,\beta ;\psi }\left( \cdot \right) $ of a function $f$ of order $\alpha $ and type $0\leq \beta \leq 1,$ is defined by \cite{ZE1,ZE23}
\begin{equation}\label{eq8}
^{H}\mathbb{D}_{a+}^{\alpha ,\beta ;\psi }f\left( x\right) =\mathcal{I}_{a+}^{\beta \left( n-\alpha \right) ;\psi }\left( \frac{1}{\psi ^{\prime }\left( x\right) }\frac{d}{dx}\right) ^{n}\mathcal{I}_{a+}^{\left( 1-\beta \right) \left(
n-\alpha \right) ;\psi }f\left( x\right) .
\end{equation}
Let $(\Omega,||\cdot||)$ be a given Banach space and $I=[0,+\infty)$ or $I=[0,T]$ where $T$ and $\mathscr{L}(\Omega)$ the set of bounded linear maps from $\Omega$ to $\Omega$.

Next, we present the definition of the fundamental operator $(\alpha,\beta)$-resolvent in the presentation of the mild solution of the fractional abstract Cauchy problem Eq.(\ref{CP}).

\begin{definition} {\rm \cite{chen}} Let $\alpha > 0$ and $\beta \geq 0$. A function $\mathbb{S}_{\alpha,\beta} : \mathbb{R}_{+} \to \mathscr{L}(\Omega)$ is called a $\beta$-times integrated $\alpha$-resolvent operator function  or an $(\alpha,\beta)$-resolvent operator function {\rm{(ROF)}} if the following conditions are satisfied:

\begin{tabular}{cl}
{\rm{(A)}} & $\mathbb{S}_{\alpha,\beta}(\cdot)$ is strongly continuous on $\mathbb{R}_{+}$ and $\mathbb{S}_{\alpha,\beta}(0)=g_{\beta+1}(0)I$;\\
{\rm{(B)}} & $\mathbb{S}_{\alpha,\beta}(s) \mathbb{S}_{\alpha,\beta}(t)= \mathbb{S}_{\alpha,\beta}(t) \mathbb{S}_{\alpha,\beta}(s)$ for all $t,s \geq 0$;\\
{\rm{(C)}} & the function equation 
\\
& 
$
\mathbb{S}_{\alpha,\beta}(s) I_t^{\alpha} \mathbb{S}_{\alpha,\beta}(t) - I_s^{\alpha} \mathbb{S}_{\alpha,\beta}(s)\mathbb{S}_{\alpha,\beta}(t) $ $=g_{\beta+1}(s) I_t^{\alpha} \mathbb{S}_{\alpha,\beta}(t) - g_{\beta+1}(t) I_s^{\alpha} \mathbb{S}_{\alpha,\beta}(s)$ 
\\
& holds for all $t,s \geq 0$.
\end{tabular}

\medskip
\end{definition}

The generator $\mathcal{A}$ of $\mathbb{S}_{\alpha,\beta}$ is defined by
\begin{equation}
D(\mathcal{A}):= \left\{x \in \Omega: \lim_{t \to 0^{+}} \frac{\mathbb{S}_{\alpha,\beta}(t)\, x - g_{\beta+1}(t)\, x}{g_{\alpha+\beta+1}(t)} \,\, {\rm{exists}}    \right\}
\end{equation}
and 
\begin{equation}
\mathcal{A}\,x := \lim_{t \to 0^{+}} \frac{\mathbb{S}_{\alpha,\beta}(t)\, x - g_{\beta+1}(t)\, x}{g_{\alpha+\beta+1}(t)}\, , \quad x \in D(\mathcal{A}),
\end{equation}
where $g_{\alpha+\beta+1}(t):= \dfrac{t^{\alpha+\beta}}{\Gamma(\alpha+\beta)}$ ($\alpha+\beta>0$).

An $(\alpha,\beta)$-ROF $\mathbb{S}_{\alpha,\beta}$ is said to be exponentially bounded if there exist constants $\delta \geq 1$, $w \geq 0$ such that
$||\mathbb{T}_{\alpha}(t)|| \leq  \delta\, e^{wt}$
and $||\mathbb{S}_{\alpha,\beta}(t)||  \leq  \delta\, e^{wt}, ~t \geq 0$.

Now, we consider the continuous function given $\mathcal{H}: I \times \Omega \rightarrow \Omega$ such that, for almost all $t \in I$, we get
\begin{equation}\label{eq2}
||\mathcal{H}(t,x) - \mathcal{H}(t,y)|| \leq \ell (t) ||x-y||_{C_{1-\gamma}} \, , ~ x,y \in \Omega
\end{equation}
where $\ell:[0,T] \to \mathbb{R}^{+}$ and $u:[0,T] \to \mathbb{R}$ are two given measurable functions such that $\ell,u$ and $\ell u$ are locally integrable on $I$.

The following is the definition of the Mainardi function, fundamental in mild solution of the Eq.(\ref{CP}). Then, the Wright function, denoted by $M_{\alpha}(\theta)$, is defined by \cite{sousa20,gu}
\begin{equation*}
M_{\alpha}(\theta) = \sum_{n=1}^{\infty} \frac{(-\theta)^{n-1}}{(n-1)!\Gamma(1-\alpha n)}\, , \quad 0 < \alpha < 1, \quad \theta \in \mathbb{C}
\end{equation*}
satisfying the relation
\begin{equation*}
\int_0^{\infty} \theta^{\overline{\delta}} M_{\alpha} (\theta) \, {\rm{d}}\theta = \frac{\Gamma(1+\overline{\delta})}{\Gamma(1+\alpha \overline{\delta})}\, , \quad {\rm{for}} \,\,
\theta, \overline{\delta} \geq 0 \cdot
\end{equation*}

\begin{lemma} {\rm \cite{sousa20,gu}} The fractional nonlinear differential equation, {\rm{Eq.(\ref{CP})}}, is equivalent to the integral equation
\begin{equation}\label{EI}
\xi(t) = \frac{t^{\gamma-1}}{\Gamma(\gamma)}\xi(0) + \frac{1}{\Gamma(\alpha)} \int_0^t (t-s)^{\alpha -1} \left[\mathcal{A} \xi(s) + u(s) \, \mathcal{H}(s,\xi(s))  \right]\, {\rm{d}}s\, , \,\, t \in [0,T] \cdot
\end{equation}

A function $\xi \in C_{1-\gamma}(I,\Omega)$ is called a mild solution of {\rm{Eq.(\ref{CP})}}, if the integral equation, {\rm{Eq.(\ref{EI})}} holds, we have 
\begin{equation}\label{eq4}
\xi(t) = \mathbb{S}_{\alpha,\beta}(t) \xi(0) + \int_0^t \mathbb{T}_{\alpha}(t-s) u(s) \mathcal{H}(s,\xi(s))\, {\rm{d}}s\, , \quad t \in I
\end{equation}
where $\displaystyle \mathbb{T}_{\alpha}(t) = t^{\alpha -1} G_{\alpha}(t)$, $\displaystyle G_{\alpha}(t) = \int_0^{\infty} \alpha \theta M_{\alpha}(\theta) \mathbb{S}(t^{\alpha \theta})\, {\rm{d}}\theta$ and $\mathbb{S}_{\alpha,\beta}(t) = \mathcal{I}_0^{\beta(1 -\alpha)} \mathbb{T}_{\beta}(t)$.
\end{lemma}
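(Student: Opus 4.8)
The statement really contains two claims: the equivalence of the Cauchy problem~\eqref{CP} with the integral equation~\eqref{EI}, and the passage from~\eqref{EI} to the subordinated mild-solution formula~\eqref{eq4}. The plan is to treat them separately.

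For the first claim I would apply the Riemann--Liouville integral $\mathcal{I}_{0^{+}}^{\alpha}$ to both sides of the Hilfer equation in~\eqref{CP}. Reading the derivative in~\eqref{eq8} in the classical case $\psi(x)=x$ and using $1-\gamma=(1-\alpha)(1-\beta)$ together with the semigroup law $\mathcal{I}^{a}\mathcal{I}^{b}=\mathcal{I}^{a+b}$ for Riemann--Liouville integrals, one obtains the composition identity
\[
\mathcal{I}_{0^{+}}^{\alpha}\,{}^{H}\mathbb{D}_{0^{+}}^{\alpha,\beta}\xi(t)
=\xi(t)-\frac{t^{\gamma-1}}{\Gamma(\gamma)}\bigl(I_{0^{+}}^{1-\gamma}\xi\bigr)(0^{+}).
\]
Substituting the right-hand side of~\eqref{CP} into the left and invoking the initial condition $I_{0^{+}}^{1-\gamma}\xi(0)=\xi_{0}$ gives~\eqref{EI} at once. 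For the converse implication I would apply ${}^{H}\mathbb{D}_{0^{+}}^{\alpha,\beta}$ to~\eqref{EI}, using that it annihilates the term $t^{\gamma-1}/\Gamma(\gamma)$ and acts as a left inverse of $\mathcal{I}^{\alpha}$ on the integral term, and separately apply $I_{0^{+}}^{1-\gamma}$ and let $t\to0^{+}$ to recover the initial datum.

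For the second claim I would pass to the Laplace transform. Applying $\mathcal{L}$ to~\eqref{CP} and using the transform rule for the Hilfer derivative, $\mathcal{L}\bigl[{}^{H}\mathbb{D}_{0^{+}}^{\alpha,\beta}\xi\bigr](\lambda)=\lambda^{\alpha}\widehat{\xi}(\lambda)-\lambda^{\alpha-\gamma}\xi_{0}$, together with the initial condition yields
\[
(\lambda^{\alpha}I-\mathcal{A})\,\widehat{\xi}(\lambda)=\lambda^{\alpha-\gamma}\xi_{0}+\widehat{f}(\lambda),
\qquad f(t):=u(t)\,\mathcal{H}(t,\xi(t)).
\]
Because $\mathcal{A}$ generates the $C_{0}$-semigroup $(\mathbb{S}(t))_{t\ge0}$, for $\operatorname{Re}\lambda^{\alpha}>w$ the resolvent is $(\lambda^{\alpha}I-\mathcal{A})^{-1}=\int_{0}^{\infty}e^{-\lambda^{\alpha}\tau}\mathbb{S}(\tau)\,\mathrm{d}\tau$, so $\widehat{\xi}(\lambda)=\lambda^{\alpha-\gamma}(\lambda^{\alpha}I-\mathcal{A})^{-1}\xi_{0}+(\lambda^{\alpha}I-\mathcal{A})^{-1}\widehat{f}(\lambda)$. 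It then remains to identify these two Laplace symbols with the subordinated operators. Using the Mainardi moment identity $\int_{0}^{\infty}\theta^{\overline{\delta}}M_{\alpha}(\theta)\,\mathrm{d}\theta=\Gamma(1+\overline{\delta})/\Gamma(1+\alpha\overline{\delta})$ (equivalently the stable-density relation $\int_{0}^{\infty}e^{-\lambda\theta}\Psi_{\alpha}(\theta)\,\mathrm{d}\theta=e^{-\lambda^{\alpha}}$), I would expand the exponential and integrate term by term to establish $\mathcal{L}[\mathbb{T}_{\alpha}](\lambda)=(\lambda^{\alpha}I-\mathcal{A})^{-1}$, and, since $\lambda^{\alpha-\gamma}=\lambda^{-\beta(1-\alpha)}$ is exactly the Laplace symbol of $\mathcal{I}_{0^{+}}^{\beta(1-\alpha)}$, conclude $\mathcal{L}[\mathbb{S}_{\alpha,\beta}](\lambda)=\lambda^{\alpha-\gamma}(\lambda^{\alpha}I-\mathcal{A})^{-1}$ with $\mathbb{S}_{\alpha,\beta}=\mathcal{I}_{0^{+}}^{\beta(1-\alpha)}\mathbb{T}_{\alpha}$. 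The convolution theorem applied to $(\lambda^{\alpha}I-\mathcal{A})^{-1}\widehat{f}(\lambda)$ then inverts to $\int_{0}^{t}\mathbb{T}_{\alpha}(t-s)u(s)\mathcal{H}(s,\xi(s))\,\mathrm{d}s$, while inverting the first term gives $\mathbb{S}_{\alpha,\beta}(t)\xi(0)$, which is precisely~\eqref{eq4}.

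The main obstacle is the rigour of this operator-valued Laplace calculus rather than the algebra. Three points need care: the exponential bound $\|\mathbb{S}(t)\|\le\delta e^{wt}$ must be used to guarantee convergence of the resolvent integral and, via Fubini, to justify interchanging the $\tau$- and $\theta$-integrations when computing $\mathcal{L}[\mathbb{T}_{\alpha}]$; the uniqueness theorem for the vector-valued Laplace transform is needed to pass from equality of transforms back to equality of the time-domain functions; and one must keep $\xi$ in the weighted space $C_{1-\gamma}(I,\Omega)$ so that the $t^{\gamma-1}$ singularity at the origin is integrable and every transform involved is well defined. On the unbounded interval $[0,\infty)$ these convergence issues are the most delicate, whereas on $[0,T]$ they reduce to finite-interval estimates.
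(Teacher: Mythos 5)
The paper offers no proof of this lemma---it is imported verbatim from \cite{sousa20,gu}---and your reconstruction follows exactly the argument of those references: the Hilfer composition identity $\mathcal{I}^{\alpha}_{0^{+}}\,{}^{H}\mathbb{D}^{\alpha,\beta}_{0^{+}}\xi(t)=\xi(t)-\frac{t^{\gamma-1}}{\Gamma(\gamma)}\bigl(I^{1-\gamma}_{0^{+}}\xi\bigr)(0^{+})$ for the passage to the integral equation, and the Laplace transform of the Hilfer derivative combined with the resolvent representation $(\lambda^{\alpha}I-\mathcal{A})^{-1}=\int_{0}^{\infty}e^{-\lambda^{\alpha}\tau}\mathbb{S}(\tau)\,\mathrm{d}\tau$ and Wright-function subordination for the mild-solution formula, with the convergence caveats you flag being exactly the ones handled there. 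Your version in fact silently corrects two typographical slips in the paper's statement: $\mathbb{S}(t^{\alpha\theta})$ should read $\mathbb{S}(t^{\alpha}\theta)$, and $\mathbb{S}_{\alpha,\beta}(t)=\mathcal{I}_{0}^{\beta(1-\alpha)}\mathbb{T}_{\beta}(t)$ should read $\mathbb{S}_{\alpha,\beta}(t)=\mathcal{I}_{0}^{\beta(1-\alpha)}\mathbb{T}_{\alpha}(t)$.
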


For a given $\xi_0 \in \Omega$ and any $\xi \in C_{1-\gamma}(I,\Omega)$, we set
\begin{equation}\label{eq5}
\Lambda (\xi)(t):= \mathbb{S}_{\alpha,\beta}(t) \xi_0 + \int_0^t \mathbb{T}_{\alpha}(t-s) u(s) \mathcal{H}(s,\xi(s))\, {\rm{d}}s
\end{equation}
for all $t \in I$.

For the procedure in this paper, $\ell,u$ are measurable functions such that $\ell,u$ and the product $\ell u$ are locally integrable. Moreover, it is easy to see that the application $\xi \to \Lambda(\xi)$ is a self-mapping of the space $C_{1-\gamma}(I,\Omega)$.

For $\xi_{0}\in\Omega$ and $\varepsilon$, we consider
\begin{equation}\label{2.1}
\xi(t) = \Lambda (\xi(t)), \quad t \in I
\end{equation}
and the following inequalities
\begin{equation}\label{2.2}
||t^{1-\gamma}\left( \xi(t) - \Lambda(\xi(t))\right) || \leq \varepsilon \, , \quad t \in I
\end{equation}
and 
\begin{equation}\label{2.3}
||t^{1-\gamma}\left( \xi(t) - \Lambda(\xi(t))\right) || \leq G(t) \, , \quad t \in I,
\end{equation}
where $\xi\in C_{1-\gamma}(I,\Omega)$ and $G \in C(I,(0,+\infty))$.

The following are the definitions of the main results to be investigated in this paper. Following the methodology of \cite{akkouchi}, the definitions were adapted to the problem version of fractional differential equations. Then we have:

\begin{definition}{\rm \cite{akkouchi,sousaulam}} The {\rm{Eq.(\ref{2.1})}} is Ulam-Hyers stable if there exists a real number $ c > 0$ such that for each $\varepsilon > 0$ and for each solution $\xi \in C_{1-\gamma}(I,\Omega)$ of {\rm{Eq.(\ref{2.2})}} there exists a solutions 
$ v \in C_{1-\gamma}(I,\Omega)$ of {\rm{Eq.(\ref{2.1})}} such that
\begin{equation}
||t^{1-\gamma}\left(\xi(t) - v(t)\right) || \leq \varepsilon \, , \quad t \in I .
\end{equation}
\end{definition}

\begin{definition}{\rm \cite{akkouchi,sousaulam}} The {\rm{Eq.(\ref{2.1})}} is generalized Ulam-Hyers stable if there exists $\theta \in C_{1-\gamma}([0,+\infty), [0,+\infty))$, $\theta(0)=0$, such that for each $\varepsilon > 0$ and for each solution $ \xi \in C_{1-\gamma}(I,\Omega)$ of {\rm{Eq.(\ref{2.2})}} there exists a solutions 
$ v \in C_{1-\gamma}(I,\Omega)$ of {\rm{Eq.(\ref{2.1})}} such that 

\begin{equation}
||t^{1-\gamma}\left(\xi(t) - v(t)\right) || \leq \theta(\varepsilon) \, , \quad t \in I .
\end{equation}
\end{definition}

\begin{definition}{\rm \cite{akkouchi,sousaulam}} The {\rm{Eq.(\ref{2.1})}} is generalized Ulam-Hyers-Rassias stable with respect to $G \in C_{1-\gamma}([0,+\infty),[0,+\infty))$, if there exists $c_{G} > 0$ such that for each solution $\xi \in C_{1-\gamma}(I,\Omega)$ of {\rm{Eq.(\ref{2.3})}} there exists a solution $ v \in C_{1-\gamma}(I,\Omega)$ of {\rm{Eq.(\ref{2.1})}} such that

\begin{equation}
||t^{1-\gamma}\left(\xi(t) - v(t)\right) ||\leq c_G G(t) \, , \quad t \in I \cdot
\end{equation}
\end{definition}
\section{Ulam-Hyers and Ulam-Hyers-Rassias stabilities of mild on $[0,T]$.}

In this section, we investigate the first of the main results of this paper, i.e., the Ulam-Hyers and Ulam-Hyers-Rassias stabilities of Eq.(\ref{2.1}) in the interval $[0,T]$, using the Banach's fixed point theorem.

Let $\left(\mathbb{S}_{\alpha,\beta}(t) \right)_{t \geq 0}$ the $(\alpha,\beta)$-resolvent operator function on a Banach space  $(\Omega,||\cdot||_{C_{1-\gamma}})$. and the continuous function $\xi:[0,T] \to \Omega$, given by
\begin{equation}\label{eq12}
\Lambda(\xi)(t):= \mathbb{S}_{\alpha,\beta}(t) \xi_0 + \int_0^t \mathbb{T}_{\alpha}(t-s) u(s) \mathcal{H}(s,\xi(s))\, {\rm{d}}s \, , \quad t \in [0,T)
\end{equation}
for $\xi_0 \in \Omega$ fixed.

Then, we have the theorem that gives certain conditions, guarantees the Ulam-Hyers stability to Eq.(\ref{2.1}) on the finite interval $[0,T]$.

\begin{theorem} 
Let $\left(\mathbb{S}_{\alpha,\beta}(t) \right)_{t \geq 0}$ the $(\alpha,\beta)$-resolvent operator function on a Banach space $(\Omega,||\cdot||_{C_{1-\gamma}})$, 
with $0 \leq \gamma \leq 1$ and let $T > 0$ be a positive real number. We set
\begin{equation}
\widetilde{\lambda}:= \delta \, T ^{1-\gamma} \int_0^T e^{w(T-s)} |u(s)| \ell(s) \, {\rm{d}}s \cdot
\end{equation}

If $\widetilde{\lambda} < 1$, then the {\rm{Eq.(\ref{2.1})}} is stable in the Ulam-Hyers sense.
\end{theorem}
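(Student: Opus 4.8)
The plan is to recognize that Eq.(\ref{2.1}) is exactly the fixed-point equation $\xi = \Lambda(\xi)$ for the operator $\Lambda$ of Eq.(\ref{eq12}), to show that $\Lambda$ is a contraction on the complete metric space $(C_{1-\gamma}(I,\Omega),\mathrm{d}_{1-\gamma})$ with Lipschitz constant $\widetilde{\lambda}$, to invoke the Banach fixed point theorem to produce the genuine solution $v$, and finally to estimate the weighted distance from an approximate solution $\xi$ to $v$ purely in terms of $\varepsilon$ and $\widetilde{\lambda}$.

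First I would derive the contraction estimate. For $\xi,\phi\in C_{1-\gamma}(I,\Omega)$ the semigroup terms $\mathbb{S}_{\alpha,\beta}(t)\xi_0$ cancel in $\Lambda(\xi)(t)-\Lambda(\phi)(t)$, leaving only the integral over the nonlinearity. Multiplying by $t^{1-\gamma}$ and passing the norm under the integral, I apply the exponential bound $\|\mathbb{T}_{\alpha}(t-s)\|\leq\delta e^{w(t-s)}$ and the Lipschitz hypothesis Eq.(\ref{eq2}); since the latter produces the factor $\ell(s)\,\|\xi-\phi\|_{C_{1-\gamma}}$, in which $\|\xi-\phi\|_{C_{1-\gamma}}$ is independent of $s$, it pulls out of the integral, giving
$$\|t^{1-\gamma}(\Lambda(\xi)(t)-\Lambda(\phi)(t))\|\leq \delta\, t^{1-\gamma}\left(\int_0^t e^{w(t-s)}|u(s)|\ell(s)\,\mathrm{d}s\right)\|\xi-\phi\|_{C_{1-\gamma}}.$$
Using $t^{1-\gamma}\leq T^{1-\gamma}$ (valid because $\gamma\leq 1$), $e^{w(t-s)}\leq e^{w(T-s)}$ (valid because $w\geq 0$), and extending the integral to $[0,T]$, the prefactor is bounded by $\widetilde{\lambda}$, so that $\|\Lambda(\xi)-\Lambda(\phi)\|_{C_{1-\gamma}}\leq \widetilde{\lambda}\,\|\xi-\phi\|_{C_{1-\gamma}}$.

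Since $\widetilde{\lambda}<1$, the Banach fixed point theorem yields a unique $v\in C_{1-\gamma}(I,\Omega)$ with $\Lambda(v)=v$; this is the exact solution of Eq.(\ref{2.1}) required in the definition of Ulam--Hyers stability. To close the argument, let $\xi$ be any solution of the perturbed inequality Eq.(\ref{2.2}). Writing $v=\Lambda(v)$ and inserting $\pm\Lambda(\xi)(t)$, the triangle inequality together with the contraction estimate gives, for each $t\in I$,
$$\|t^{1-\gamma}(\xi(t)-v(t))\|\leq \varepsilon+\widetilde{\lambda}\,\|\xi-v\|_{C_{1-\gamma}}.$$
Taking the supremum over $t$ on the left, rearranging, and dividing by $1-\widetilde{\lambda}>0$ yields $\|\xi-v\|_{C_{1-\gamma}}\leq \varepsilon/(1-\widetilde{\lambda})$, so the Ulam--Hyers inequality holds with the constant $c=(1-\widetilde{\lambda})^{-1}>0$.

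The step I expect to be the main obstacle is the contraction estimate, and within it the bookkeeping that absorbs both the weight $t^{1-\gamma}$ and the exponential $e^{w(t-s)}$ into the single constant $\widetilde{\lambda}$: it is precisely the monotonicity bounds $t^{1-\gamma}\leq T^{1-\gamma}$ and $e^{w(t-s)}\leq e^{w(T-s)}$ that make $\widetilde{\lambda}$ independent of $t$, and this is where the hypotheses $0\leq\gamma\leq 1$ and $w\geq 0$ are genuinely used. The remaining manipulations—the cancellation of the semigroup term, the factoring of the sup-norm out of the integral, and the final rearrangement—are routine.
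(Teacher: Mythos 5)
Your proposal is correct and follows essentially the same route as the paper: the same contraction estimate yielding the Lipschitz constant $\widetilde{\lambda}$ via the bounds $t^{1-\gamma}\le T^{1-\gamma}$ and $e^{w(t-s)}\le e^{w(T-s)}$, followed by the Banach fixed point theorem and a triangle-inequality rearrangement producing the constant $c=(1-\widetilde{\lambda})^{-1}$. The only immaterial difference is organizational: the paper applies the fixed point theorem to $\Lambda$ restricted to the invariant closed ball $\overline{B}_{C_{1-\gamma}}\left(\theta,\tfrac{\varepsilon}{1-\widetilde{\lambda}}\right)$ centered at the approximate solution, which locates the fixed point within the required distance directly, whereas you obtain the fixed point on the whole space and then derive the same bound a posteriori from $d_{1-\gamma}(\xi,v)\le\varepsilon+\widetilde{\lambda}\,d_{1-\gamma}(\xi,v)$.
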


\begin{proof} 
Admit that $\widetilde{\lambda} < 1$ and let $\varepsilon > 0$ be given. For $\phi,\xi \in C_{1-\gamma}(I,\Omega)$, we have
\begin{equation*}
\begin{array}{rll}
||(\Lambda \phi)(t) - (\Lambda \xi)(t)|| & = & \displaystyle \left|\left|\mathbb{S}_{\alpha,\beta}(t) \xi_0 + \int_0^t \mathbb{T}_{\alpha}(t-s) u(s) \mathcal{H}(s,\phi(s))\, {\rm{d}}s \right.\right.\\
& - & \displaystyle \left.\left. \mathbb{S}_{\alpha,\beta}(t) \xi_0 - \int_0^t \mathbb{T}_{\alpha}(t-s) u(s) \mathcal{H}(s,\xi(s))\, {\rm{d}}s \right|\right|\\
& = & \displaystyle \left|\left| \int_0^t \mathbb{T}_{\alpha}(t-s) u(s) \left( \mathcal{H}(s,\phi(s)) - \mathcal{H}(s,\xi(s)) \right) \, {\rm{d}}s \right|\right|\\
& \leq & \displaystyle \int_0^t ||\mathbb{T}_{\alpha}(t-s)|| |u(s)| ||\mathcal{H}(s,\phi(s)) - \mathcal{H}(s,\xi(s)) || \, {\rm{d}}s\\
& \leq & \displaystyle \int_0^t ||\mathbb{T}_{\alpha}(t-s)|| |u(s)|\, \ell(s)\, ||\phi - \xi ||_{C_{1-\gamma}} \, {\rm{d}}s\\
& = & \displaystyle \delta \int_0^T  \,e^{w(T-s)} |u(s)|\, \ell(s)\, {\rm{d}}s \, ||\phi - \xi ||_{C_{1-\gamma}} \, , \quad t \in [0,T] \cdot
\end{array}
\end{equation*}
Therefore 
\begin{align*}
||(\Lambda \phi) - (\Lambda \xi)||_{C_{1-\gamma}} &= \sup_{t \in I}||t^{1-\gamma}\left((\Lambda \phi)(t) - (\Lambda \xi)(t)\right) ||\\
& \leq  \left( \displaystyle \delta \, T ^ {1-\gamma}\int_0^T  \,e^{w(T-s)} |u(s)|\, \ell(s)\, {\rm{d}}s\right)  \, ||\phi - \xi ||_{C_{1-\gamma}}.
\end{align*}

So, we get
\begin{equation*}
{\rm{d}}_{1-\gamma}(\Lambda \phi, \Lambda \xi) \leq \widetilde{\lambda} \, {\rm{d}}_{1-\gamma}(\phi,\xi).
\end{equation*}

Since $\widetilde{\lambda}<1$,  $\Lambda$ is a contradiction. On the other hand, consider $\theta,\phi \in C_{1-\gamma}(I,\Omega)$, such that
\begin{equation*}
{\rm{d}}_{1-\gamma}(\Lambda \theta, \theta) \leq \varepsilon \,
\end{equation*}
and
\begin{equation*}
{\rm{d}}_{1-\gamma}(\phi,\xi) \leq \frac{\varepsilon}{1-\widetilde{\lambda}}.
\end{equation*}

Then, we obtain
\begin{equation*}
\begin{array}{rll}
{\rm{d}}_{1-\gamma} (\theta,\Lambda \phi) & \leq & {\rm{d}}_{1-\gamma}(\theta, \Lambda \theta) + {\rm{d}}_{1-\gamma}(\Lambda \theta, \Lambda \phi)\\
& \leq & \displaystyle \varepsilon + \frac{\widetilde{\lambda}\varepsilon}{1-\widetilde{\lambda}} \leq \frac{\varepsilon}{1-\widetilde{\lambda}} \cdot
\end{array}
\end{equation*}

In this sense, we have that the  closed ball $\displaystyle \overline{B}_{C_{1-\gamma}} \left(\theta, \frac{\varepsilon}{1-\widetilde{\lambda}} \right)$ of the Banach space $C_{1-\gamma}(I,\Omega)$ is invariant by the map $\Lambda$, i.e.
\begin{equation*}
\Lambda \left( \overline{B}_{C_{1-\gamma}} \left(\theta, \frac{\varepsilon}{1-\widetilde{\lambda}} \right) \right) \subset \overline{B}_{C_{1-\gamma}} 
\left(\theta, \frac{\varepsilon}{1-\widetilde{\lambda}} \right) \cdot 
\end{equation*}

Then, applying the Banach fixed-point theorem to $\Lambda $ acting on $\overline{B}_{C_{1-\gamma}} \left(\theta, \dfrac{\varepsilon}{1-\widetilde{\lambda}} \right)$, we have that there is only one element $\xi \in \overline{B}_{C_{1-\gamma}} 
\left(\theta, \dfrac{\varepsilon}{1-\widetilde{\lambda}} \right)$ such that $\xi=\Lambda (\xi)$.  So we have $\xi$ is a solution of the  {\rm Eq.(\ref{2.1})}, which satisfies 
\begin{equation*}
d_{1-\gamma}(\theta,\xi) \leq \frac{\epsilon}{1-\widetilde{\lambda}},
\end{equation*}
which gives 
\begin{equation*}
||t^{1-\gamma}\left(\theta(t) - \xi(t)\right) || \leq  c \, \varepsilon \, , \quad t \in [0,T]
\end{equation*}
where $c:=1/(1-\widetilde{\lambda})$. Thus, we conclude that the integral equation {\rm{Eq.(\ref{2.1})}} is stable in the Ulam-Hyers sense .
\end{proof}

 Next, we will investigate the Ulam-Hyers-Rassias stability by completing the first purpose of this paper.

\begin{theorem} Let $(\Omega,||\cdot||)$ be a Banach space and let $(\mathbb{S}_{\alpha,\beta}(t))_{t \geq 0}$ be a $(\alpha,\beta)$-resolvent operator function on $\Omega$. Let $\delta \geq 1$, $w \geq 0$ be constants such that 
\begin{equation}\label{eq14}
||\mathbb{S}_{\alpha,\beta}(t)|| \leq \delta\, e^{wt}\, \text{and}\,\, ||\mathbb{T}_{\alpha}(t)||\leq \delta\, e^{wt}
\end{equation}
for all $t \geq 0$. Let $\xi_0 \in\Omega$, $T > 0$ and $G:[0,T] \to (0,\infty)$ be a continuous function.

Suppose that a continuous function $f:[0,T] \to \Omega$ satisfies
\begin{equation}\label{4.1}
\left|\left| t^{1-\gamma}(f(t) - \mathbb{S}_{\alpha,\beta}(t) \xi_0 - \int_0^t \mathbb{T}_{\alpha}(t-s) u(s) \mathcal{H}(s,f(s)) \, {\rm{d}}s )\right|\right| \leq G(t)
\end{equation}
for all $t \in [0,T]$.

Suppose that there exists a positive constant ${\sf \rho}$ such that
\begin{equation}\label{4.2}
\ell(s) |u(s)| e^{w(T-s)} \leq {\sf \rho}
\end{equation}
for almost all $s \in [0,T]$. Then, $\exists ~C_{G} > 0$ (constant) and a unique continuous function $v:[0,T] \rightarrow \rightarrow \Omega$ such that
\begin{equation}\label{4.3}
v(t) = \mathbb{S}_{\alpha,\beta}(t) \xi_0 + \int_0^t \mathbb{T}_{\alpha}(t-s) u(s) \mathcal{H}(s,v(s))\, {\rm{d}}s \, , \quad t \in [0,T]
\end{equation}
and 
\begin{equation}\label{4.4}
||f(t) - v(t)|| \leq C_{G} G(t) \, , \quad t \in [0,T] \, \cdot
\end{equation}
\end{theorem}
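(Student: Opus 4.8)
The plan is to read the statement as a fixed-point phenomenon and to run the Banach fixed point theorem exactly as in the previous theorem, but now interpreting inequality (\ref{4.1}) as the assertion that $f$ is an \emph{approximate} fixed point of $\Lambda$ measured against the weight $G$. Because the desired conclusion (\ref{4.4}) is a pointwise-in-$t$ estimate against $G(t)$ rather than a uniform one, the natural space to work in is $C_{1-\gamma}([0,T],\Omega)$ equipped with the $G$-weighted metric
\[
\mathrm{d}_G(\phi,\xi):=\sup_{t\in[0,T]}\frac{\|t^{1-\gamma}(\phi(t)-\xi(t))\|}{G(t)},
\]
which is equivalent to $\mathrm{d}_{1-\gamma}$ since $G$ is continuous and strictly positive on the compact interval $[0,T]$, so that $0<\min G\le G\le\max G<\infty$; in particular $(C_{1-\gamma}([0,T],\Omega),\mathrm{d}_G)$ is complete, and (\ref{4.1}) says precisely $\mathrm{d}_G(f,\Lambda f)\le 1$.

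The core estimate is to show that $\Lambda$ contracts for $\mathrm{d}_G$. Starting from (\ref{eq12}), the $\mathbb{S}_{\alpha,\beta}(t)\xi_0$ terms cancel, and the Lipschitz hypothesis (\ref{eq2}) together with the bound $\|\mathbb{T}_\alpha(t-s)\|\le\delta e^{w(t-s)}\le\delta e^{w(T-s)}$ from (\ref{eq14}) gives
\[
\|(\Lambda\phi)(t)-(\Lambda\xi)(t)\|\le \delta\int_0^t e^{w(T-s)}|u(s)|\,\ell(s)\,\mathrm{d}s\;\|\phi-\xi\|_{C_{1-\gamma}}.
\]
Here the uniform bound (\ref{4.2}), $\ell(s)|u(s)|e^{w(T-s)}\le\rho$, is exactly what controls the integral, reducing it to $\int_0^t\rho\,\mathrm{d}s=\rho t\le\rho T$. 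Multiplying by $t^{1-\gamma}$, dividing by $G(t)$, and taking the supremum over $t\in[0,T]$ (using $\|\cdot\|_{C_{1-\gamma}}\le(\max G)\,\mathrm{d}_G(\cdot,\cdot)$ to compare the two metrics) produces a Lipschitz constant of the form $L=\delta\rho\,T^{2-\gamma}(\max G)/(\min G)$. Under the smallness that forces $L<1$ — the analogue of $\widetilde\lambda<1$ in the previous theorem, which is the genuine working hypothesis — the map $\Lambda$ is a contraction.

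With the contraction established, the Banach fixed point theorem supplies a unique $v\in C_{1-\gamma}([0,T],\Omega)$ with $v=\Lambda v$, which is exactly (\ref{4.3}) and delivers the uniqueness claim. The Ulam--Hyers--Rassias bound then follows from the standard a posteriori estimate for an approximate fixed point,
\[
\mathrm{d}_G(f,v)\le\frac{1}{1-L}\,\mathrm{d}_G(f,\Lambda f)\le\frac{1}{1-L},
\]
so that setting $C_G:=1/(1-L)$ yields $\|t^{1-\gamma}(f(t)-v(t))\|\le C_G\,G(t)$ for all $t\in[0,T]$. I expect the main obstacle to be precisely the contraction step: hypothesis (\ref{4.2}) bounds the integrand but does not by itself push the Lipschitz constant below $1$, so the argument either needs the implicit smallness $L<1$ or a Bielecki-type exponential reweighting to absorb the $T^{2-\gamma}$ factor. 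I would also flag that the conclusion is most naturally the weighted estimate $\|t^{1-\gamma}(f-v)\|\le C_G\,G$, the unweighted form (\ref{4.4}) holding only after absorbing the factor $t^{\gamma-1}$.
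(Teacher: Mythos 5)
Your proposal stalls exactly where you predict it will: the contraction step does not close. With the metric weighted directly by $G$, your estimate $\int_0^t \ell(s)|u(s)|e^{w(T-s)}\,\mathrm{d}s \le \rho t \le \rho T$ yields a Lipschitz constant $L=\delta\rho\,T^{2-\gamma}(\max G)/(\min G)$ that the hypotheses of the theorem do not force below $1$ --- and the theorem as stated carries no smallness assumption on $\rho$, $\delta$ or $T$. You name the two possible repairs (assume $L<1$, or reweight Bielecki-style) but execute neither, so as written the argument proves only a weaker statement with an extra unstated hypothesis. This is a genuine gap, not a cosmetic one, because the whole point of the hypotheses \eqref{4.1}--\eqref{4.2} is that they suffice without any smallness condition.

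The paper closes this gap with precisely the Bielecki device you mention. It first fixes $K>0$ so small that $\delta\rho K\,T^{1-\gamma}<1$, then chooses a continuous $\phi:[0,T]\to(0,\infty)$ with $\int_0^t\phi(s)\,\mathrm{d}s\le K\phi(t)$ (e.g.\ $\phi(t)=e^{t/K}$), and uses the $\phi$-weighted metric $\mathrm{d}_{\phi,1-\gamma}(h,g)=\inf\{C:\|t^{1-\gamma}(h(t)-g(t))\|\le C\phi(t)\}$ rather than the $G$-weighted one. The key point is that the factor $\phi(s)$ now sits \emph{inside} the integral, so \eqref{4.2} gives $\int_0^t e^{w(t-s)}|u(s)|\ell(s)\phi(s)\,\mathrm{d}s\le\rho\int_0^t\phi(s)\,\mathrm{d}s\le\rho K\phi(t)$, and the contraction constant becomes $\delta\rho K\,T^{1-\gamma}$, which is $<1$ by the \emph{choice} of $K$ --- no hypothesis needed. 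Since $G$ and $\phi$ are both continuous and strictly positive on the compact interval $[0,T]$, constants $\widetilde\alpha_G,\widetilde\beta_G>0$ with $\widetilde\alpha_G\phi\le G\le\widetilde\beta_G\phi$ exist automatically, so \eqref{4.1} gives $\mathrm{d}_{\phi,1-\gamma}(f,\Lambda f)\le\widetilde\beta_G$ and the a posteriori estimate converts back to a bound by $C_G\,G(t)$ with $C_G=\widetilde\beta_G/\bigl((1-\delta\rho K T^{1-\gamma})\widetilde\alpha_G\bigr)$. Your closing remark is correct and worth keeping: the paper actually proves the weighted inequality $\|t^{1-\gamma}(f(t)-v(t))\|\le C_G G(t)$, and the unweighted form \eqref{4.4} in the statement does not match what the proof delivers.
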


\begin{proof}  Consider the constant $K > 0$  such that $\delta \, \rho\, K\, T^{1-\gamma}<1$ and chose a continuous function $\phi:[0,T] \to (0,\infty)$ as follows,
\begin{equation}\label{4.6}
\int_0^t \phi(s) \, {\rm{d}}s \leq K\, \phi(t)\, , \quad t \in [0,T].
\end{equation}

Now let, $f$, $G$ satisfy the inequality {\rm{(\ref{4.1})}} and let ~$\widetilde{\alpha}_G,~\widetilde{\beta}_G>0$ such that
\begin{equation}\label{4.7}
\widetilde{\alpha}_G \phi(t) \leq G(t) \leq \widetilde{\beta}_G \phi(t) \, , \quad t \in [0,T] \, \cdot
\end{equation}

On the other hand, for all $h,g \in C_{1-\gamma}(I,\Omega)$, consider the following set
\begin{equation*}
{\rm{d}}_{\phi,1-\gamma}(h,g) : = {\rm{inf}}\left\{C \in [0,\infty): ||t^{1-\gamma}\left(h(t) - g(t)\right) || \leq C \phi(t) \, , \quad t \in [0,T] \right\}\, \cdot
\end{equation*}

It is easy to see that $(C_{1-\gamma}(I,\Omega),{\rm{d}}_{\phi,1-\gamma})$ is a metric and that $(C_{1-\gamma}(I,\Omega),{\rm{d}}_{\phi,1-\gamma})$ is a complete metric space.

Now, consider the operator $\Lambda : C_{1-\gamma}(I,\Omega) \to C_{1-\gamma}(I,\Omega)$ defined by
\begin{equation*}
(\Lambda h)(t):= \mathbb{S}_{\alpha,\beta}(t) \xi_0 + \int_0^t \mathbb{T}_{\alpha}(t-s) u(s) \mathcal{H}(s,h(s))\, {\rm{d}}s\, , \quad t \in [0,T] \cdot
\end{equation*}

The next step is to show that $\Lambda $ is a contraction in the metric space $C_{1-\gamma}(I,\Omega)$ induced by metric ${\rm{d}}_{\phi,1-\gamma}$. Then, let $h,g \in C_{1-\gamma}(I,\Omega)$ and $C(h,g) \in [0,\infty)$ a constant such that
\begin{equation*}
||t^{1-\gamma}\left(h(t) - g(t)\right)|| \leq C(h,g) \phi(t) \, , \quad t \in [0,T]\, \cdot
\end{equation*}
Then, using {\rm{Eq.(\ref{eq14})}}, {\rm{Eq.(\ref{4.2})}} and {\rm{Eq.(\ref{4.6})}}, we obtain
\begin{equation*}
\begin{array}{rll}
||(\Lambda h)(t) - (\Lambda g)(t)|| & = & \displaystyle \left|\left| \int_0^t \mathbb{T}_{\alpha}(t-s) u(s) \left( \mathcal{H}(s,h(s)) - \mathcal{H}(s,g(s)) \right)\, 
{\rm{d}}s \right|\right|\\
& \leq & \displaystyle  \int_0^t ||\mathbb{T}_{\alpha}(t-s)|| |u(s)| ||\mathcal{H}(s,h(s)) - \mathcal{H}(s,g(s))|| \, {\rm{d}}s \\
&\leq & \displaystyle  \int_0^t \delta\, e^{w(t-s)} |u(s)| \ell(s) ||h-g||_{C_{1-\gamma}} \, {\rm{d}}s \\
& \leq & \displaystyle  \delta C(h,g) \int_0^t  e^{w(t-s)} \phi(s) |u(s)| \ell(s) \, {\rm{d}}s \\
& \leq & \displaystyle  \delta C(h,g) \rho \int_0^t  \phi(s) \, {\rm{d}}s \\
& \leq & T^{1-\gamma}\delta C(h,g)\rho K \phi(t) \, , \quad t \in [0,T] \cdot
\end{array}
\end{equation*}

Therefore, we have $d_{\phi,1-\gamma}(\Lambda(h),\Lambda(g)) \leq \delta \, \rho \, K \,T ^ {1-\gamma}\, C(h,g)$ from which we deduce that
\begin{equation*}
d_{\phi,1-\gamma}(\Lambda(h),\Lambda(g)) \leq \delta \, \rho \, K \,T ^ {1-\gamma}\,  d_{\phi,1-\gamma}(h,g)  \cdot
\end{equation*}

Using the fact that $\delta \, \rho \, K \,T ^ {1-\gamma}< 1$, we have that $\Lambda$ is a contraction in $(C_{1-\gamma}(I,\Omega),d_{\phi,1-\gamma})$. In this sense, through Banach's fixed point theorem, we have that there is a unique function $v \in C_{1-\gamma}(I,\Omega)$ such that $v=\Lambda(v)$. Now, using By the triangle inequality, we get
\begin{equation*}
\begin{array}{rll}
d_{\phi,1-\gamma}(f,v) & \leq & d_{\phi,1-\gamma}(f,\Lambda(f)) + d_{\phi,1-\gamma}(\Lambda(f),\Lambda(v))\\
& \leq & \beta_{G} + \delta \, \rho \, K \,T ^ {1-\gamma}\, d_{\phi,1-\gamma}(f,v)
\end{array}
\end{equation*}
which implies that
\begin{equation}
d_{\phi,1-\gamma}(f,v) \leq \frac{\beta_G}{1-\delta \, \rho \, K \,T ^ {1-\gamma}}.
\end{equation}

Which in turn, gives
\begin{equation}\label{eq21}
\begin{array}{rll}
||t^{1-\gamma}\left(f(t) - v(t)\right) ||& \leq & \displaystyle \frac{\beta_G}{1-\delta \, \rho \, K \,T ^ {1-\gamma}} \phi(t)\\
&&\\
& \leq & \displaystyle \frac{\beta_G}{1-\delta \, \rho \, K \,T ^ {1-\gamma}} \frac{G(t)}{\alpha_G} = C_G G(t) \, , \quad t \in [0,T]
\end{array}
\end{equation}
where $C_G:= \displaystyle \frac{\beta_G}{(1-\delta \, \rho \, K \,T ^ {1-\gamma}) \, \alpha_G},$ which is the desired inequality {\rm{(\ref{4.4})}}.
\end{proof}

\begin{rem}
From {\rm Theorem 1} and {\rm Theorem 2}, we have some particular cases, that is, by taking the boundaries with $\beta \rightarrow 1 $ and $ \beta \rightarrow 0$. Also, we have the whole case when $ \alpha = 1 $. So we have the following versions:

{\rm (1)} Taking $\beta \rightarrow 0$ in {\rm Eq.(6)}, we have as particular case, the version of Theorem 1 for the Riemann-Liouville fractional derivative, given by:

\begin{theorem} Let $\left( S_{\alpha ,0}\left( t\right) \right) _{t\geq 0}$ the $\left( \alpha ,0\right) -$ resolvent operator function on Banach space $\left( \Omega ,\left\Vert {\cdot}\right\Vert \right) $ with and let $T>0$ be a positive real number. We set
\begin{equation*}
\widetilde{\lambda }:=\delta \, T^{1-\gamma} \int_{0}^{T}e^{\omega \left( T-s\right)
}\left\vert u\left( s\right) \right\vert \ell \left( s\right) ds.
\end{equation*}
If $\widetilde{\lambda }<1$ them {\rm Eq.(6)} is stable in the Ulam-Hyers sense.
\end{theorem}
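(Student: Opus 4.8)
The plan is to recognize that this statement is precisely the $\beta \to 0$ specialization of Theorem 1 (Theorem 3.1), and to carry out the identical contraction argument in the limiting setting. Setting $\beta = 0$ forces $\gamma = \alpha + \beta - \alpha\beta = \alpha$, so the ambient weighted space becomes $C_{1-\alpha}(I,\Omega)$ with metric $\mathrm{d}_{1-\gamma} = \mathrm{d}_{1-\alpha}$, while the $\psi$-Hilfer derivative collapses to the Riemann--Liouville derivative. The crucial observation is that the resolvent $\mathbb{S}_{\alpha,0}$ and the operator $\mathbb{T}_{\alpha}$ retain the exponential bounds $\|\mathbb{S}_{\alpha,0}(t)\| \leq \delta e^{wt}$ and $\|\mathbb{T}_{\alpha}(t)\| \leq \delta e^{wt}$, which are the only structural facts the proof of Theorem 1 ever invoked.

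First I would define the self-map $\Lambda$ on $C_{1-\gamma}(I,\Omega)$ exactly as in Eq.(\ref{eq12}), now with $\beta = 0$, and estimate $\|(\Lambda\phi)(t) - (\Lambda\xi)(t)\|$ for arbitrary $\phi,\xi \in C_{1-\gamma}(I,\Omega)$. The term $\mathbb{S}_{\alpha,0}(t)\xi_0$ cancels in the difference, so the estimate reduces to the convolution integral involving only $\mathbb{T}_{\alpha}$. Using the exponential bound on $\mathbb{T}_{\alpha}$ together with the Lipschitz inequality Eq.(\ref{eq2}) on $\mathcal{H}$, I would bound the integrand by $\delta e^{w(t-s)}|u(s)|\ell(s)\,\|\phi-\xi\|_{C_{1-\gamma}}$, then multiply by the weight $t^{1-\gamma}$ and take the supremum over $t \in [0,T]$. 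Replacing $t$ by its upper bound $T$ inside both the exponential and the weight yields precisely $\mathrm{d}_{1-\gamma}(\Lambda\phi,\Lambda\xi) \leq \widetilde{\lambda}\,\mathrm{d}_{1-\gamma}(\phi,\xi)$.

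Next, since $\widetilde{\lambda} < 1$, the map $\Lambda$ is a contraction on the complete metric space $(C_{1-\gamma}(I,\Omega),\mathrm{d}_{1-\gamma})$. Given $\varepsilon > 0$ and an approximate solution $\theta$ satisfying $\mathrm{d}_{1-\gamma}(\theta,\Lambda\theta) \leq \varepsilon$, I would verify via the triangle inequality that the closed ball $\overline{B}_{C_{1-\gamma}}(\theta,\varepsilon/(1-\widetilde{\lambda}))$ is invariant under $\Lambda$, exactly as in the proof of Theorem 1. Banach's fixed point theorem then produces a unique fixed point $\xi$ in that ball, which is a genuine mild solution of Eq.(\ref{2.1}), and the bound $\mathrm{d}_{1-\gamma}(\theta,\xi) \leq \varepsilon/(1-\widetilde{\lambda})$ delivers the Ulam--Hyers estimate with constant $c = 1/(1-\widetilde{\lambda})$.

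The only genuinely new point, and the one requiring care, is justifying that the passage $\beta \to 0$ is legitimate: one must check that $\mathbb{S}_{\alpha,0}(t) = \lim_{\beta \to 0}\mathcal{I}_0^{\beta(1-\alpha)}\mathbb{T}_{\beta}(t)$ is well defined, that it inherits the exponential bound, and that the weighted space with $\gamma = \alpha$ is the correct functional setting for the Riemann--Liouville problem. Once this is settled, the contraction argument is entirely insensitive to the value of $\beta$, since $\beta$ enters the estimate only through $\gamma$, and the Ulam--Hyers conclusion follows verbatim from the proof of Theorem 1.
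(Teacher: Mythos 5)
Your proposal is correct and coincides with the paper's treatment: the paper offers no separate proof of this statement, presenting it in Remark 3.3 as the $\beta\to 0$ (hence $\gamma=\alpha$) particular case of Theorem 3.1, whose contraction and invariant-ball argument you reproduce verbatim. The only addition you make — checking that $\mathbb{S}_{\alpha,0}$ is well defined and keeps the exponential bound so the specialization is legitimate — is a detail the paper silently assumes, not a different route.
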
 

{\rm (2)} Taking limit $\beta \rightarrow 1$ in {\rm Eq.(6)}, we have the version of Theorem 1 for the Caputo fractional derivative, ensuring that {\rm Eq.(6)} is Ulam-Hyers stable.

{\rm (3)} Taking limit $\beta \rightarrow 1$ in {\rm Eq.(6)}, we have as particular case the version of {\rm Theorem 2} for the Caputo fractional derivative given by the following theorem (Ulam-Hyers-Rassias):

\begin{theorem} Let $\left( \Omega ,\left\Vert {\cdot}\right\Vert \right) $ be a Banach space and $\left( \mathbb{S}_{\alpha ,1}\left( t\right) \right) _{t\geq 0}$ be $\left( \alpha ,1\right) -$ resolvent operator function on $\Omega $. Let $\delta \geq 1$, $\omega \geq 0$ be constants such that $\left\Vert \mathbb{S}_{\alpha ,1}\left( t\right) \right\Vert \leq \delta e^{\omega t}$ and $\left\Vert \mathbb{T}_{\alpha }\left( t\right) \right\Vert \leq \delta e^{\omega t}$ for all $ t\geq 0$. Let $\xi _{0}\in \Omega $ be fixed, $T>0$ and $G:\left[ 0,T\right] \rightarrow \left( 0,\infty \right) $ be a continuous function. Suppose that a continuous function $f:\left[ 0,T\right] \rightarrow \Omega $ satisfies
\begin{equation*}
\left\Vert t^{1-\gamma}(f\left( t\right) -\mathbb{S}_{\alpha ,1}\left( t\right) \xi _{0}-\int_{0}^{t}\mathbb{T}_{\alpha }\left( t-s\right) u\left( s\right) H\left( s,f\left( s\right) \right) ds)\right\Vert \leq G\left( t\right) 
\end{equation*}
for all $t\in \left[ 0,T\right]$.

Suppose that exists a positive constant $\rho $ such that 
\begin{equation*}
\ell \left( s\right) \left\vert u\left( s\right) \right\vert e^{\omega
\left( T-s\right) }\leq \rho 
\end{equation*}
for almost all $s\in \left[ 0,T\right]$. Then, exist the constant $C_{G}>0$ and a unique continuous functions $v:\left[ 0,T\right] \rightarrow \Omega $ such that
\begin{equation*}
v\left( t\right) =\mathbb{S}_{\alpha ,1}\left( t\right) \xi
_{0}+\int_{0}^{t}\mathbb{T}_{\alpha }\left( t-s\right) u\left( s\right) H\left(
s,f\left( s\right) \right) ds,\text{ }t\in \left[ 0,T\right] 
\end{equation*}
and
\begin{equation*}
\left\Vert t^{1-\gamma}(f\left( t\right) -v\left( t\right)) \right\Vert \leq C_{G}G\left(t\right) ,\text{ }\forall t\in \left[ 0,T\right] .
\end{equation*}
\end{theorem}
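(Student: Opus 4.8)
The plan is to recognize that this statement is precisely Theorem 2 specialized to $\beta = 1$, so I would run the identical Banach fixed point argument and merely verify that each step survives the specialization. First I would record that every hypothesis of Theorem 2 is in force with $\mathbb{S}_{\alpha,\beta}$ replaced by $\mathbb{S}_{\alpha,1}$: the exponential bounds $\|\mathbb{S}_{\alpha,1}(t)\| \le \delta e^{\omega t}$ and $\|\mathbb{T}_{\alpha}(t)\| \le \delta e^{\omega t}$, the Lipschitz estimate (\ref{eq2}) on $\mathcal{H}$, the local integrability of $\ell$, $u$, $\ell u$, the perturbation bound (\ref{4.1}), and the pointwise bound $\ell(s)|u(s)| e^{\omega(T-s)} \le \rho$. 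Since $\beta = 1$ forces $\gamma = \alpha + \beta - \alpha\beta = 1$, the weight $t^{1-\gamma}$ reduces to $1$ and $C_{1-\gamma}(I,\Omega)$ collapses to $C(I,\Omega)$; I would nevertheless retain the weighted notation so that the argument reads as a literal instance of Theorem 2.

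Next I would reconstruct the weighted metric. Choose $K > 0$ with $\delta \rho K T^{1-\gamma} < 1$ and a continuous $\phi : [0,T] \to (0,\infty)$ satisfying $\int_0^t \phi(s)\, ds \le K \phi(t)$, as in (\ref{4.6}); the choice $\phi(t) = e^{t/K}$ works, since $\int_0^t e^{s/K}\, ds = K(e^{t/K} - 1) \le K\phi(t)$. Because $G$ and $\phi$ are continuous and strictly positive on the compact interval $[0,T]$, the quotient $G/\phi$ attains a positive minimum and a finite maximum, which furnishes constants $\widetilde{\alpha}_G, \widetilde{\beta}_G > 0$ with $\widetilde{\alpha}_G \phi(t) \le G(t) \le \widetilde{\beta}_G \phi(t)$. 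I would then equip $C_{1-\gamma}(I,\Omega)$ with the metric $d_{\phi,1-\gamma}$, which is complete.

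Then I would define $(\Lambda h)(t):= \mathbb{S}_{\alpha,1}(t)\xi_0 + \int_0^t \mathbb{T}_{\alpha}(t-s) u(s)\mathcal{H}(s,h(s))\, ds$ and reproduce the contraction estimate verbatim: for $h, g \in C_{1-\gamma}(I,\Omega)$ the $\mathbb{S}_{\alpha,1}(t)\xi_0$ terms cancel, the Lipschitz condition and the bound on $\mathbb{T}_{\alpha}$ dominate the integrand by $\delta e^{\omega(t-s)} |u(s)| \ell(s) \phi(s)\, C(h,g)$, and applying the bound $\ell(s)|u(s)| e^{\omega(T-s)} \le \rho$ followed by $\int_0^t \phi(s)\, ds \le K\phi(t)$ collapses the right-hand side to $\delta \rho K T^{1-\gamma} C(h,g)\, \phi(t)$. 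Hence $d_{\phi,1-\gamma}(\Lambda h, \Lambda g) \le \delta \rho K T^{1-\gamma}\, d_{\phi,1-\gamma}(h,g)$ with factor $<1$, so Banach's fixed point theorem produces a unique $v = \Lambda v$ solving (\ref{4.3}). Finally the triangle inequality $d_{\phi,1-\gamma}(f,v) \le d_{\phi,1-\gamma}(f,\Lambda f) + d_{\phi,1-\gamma}(\Lambda f, \Lambda v)$, together with (\ref{4.1}), which gives $d_{\phi,1-\gamma}(f,\Lambda f) \le \widetilde{\beta}_G$, yields $d_{\phi,1-\gamma}(f,v) \le \widetilde{\beta}_G / (1 - \delta \rho K T^{1-\gamma})$; multiplying by $\phi(t) \le G(t)/\widetilde{\alpha}_G$ delivers (\ref{4.4}) with $C_G = \widetilde{\beta}_G / [(1 - \delta \rho K T^{1-\gamma})\widetilde{\alpha}_G]$.

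The honest assessment is that there is no genuine obstacle: the result is a corollary of Theorem 2, and the only thing requiring verification is that the proof of Theorem 2 never actually invoked $0 < \beta < 1$ — every inequality rests solely on the exponential bounds, the Lipschitz condition, and the integrability hypotheses, all of which persist as $\beta \to 1$. The single point deserving a word of care is the degeneration $\gamma \to 1$, which merely trivializes the weight and so creates no difficulty; one should simply confirm that $\mathbb{S}_{\alpha,1}$ is still a well-defined $(\alpha,1)$-resolvent operator function obeying the stated exponential bound, and this is already assumed in the hypotheses.
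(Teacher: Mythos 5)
Your proposal is correct and matches the paper's treatment: the paper presents this statement purely as the $\beta\to 1$ specialization of Theorem 2 (so $\gamma=\alpha+\beta-\alpha\beta=1$ and the weight $t^{1-\gamma}$ trivializes), and your argument is exactly the paper's fixed-point proof of Theorem 2 rerun in that case, with the same choice of $K$, $\phi$, the metric $d_{\phi,1-\gamma}$, the contraction constant $\delta\rho K T^{1-\gamma}$, and the final constant $C_G=\widetilde{\beta}_G/[(1-\delta\rho K T^{1-\gamma})\widetilde{\alpha}_G]$. No new idea is needed and none is missing.
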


4. Taking limit $\beta \rightarrow 1$ or $\beta \rightarrow 0$ and choosing $\alpha =1$, we have the version of the {\rm Theorem 1} and {\rm Theorem 2}, for integer case.
\end{rem}

\section{Ulam-Hyers and Ulam-Hyers-Rassias stabilities of mild solution on $[0,+\infty)$}

As in section 3, we will investigate the Ulam-Hyers and Ulam-Hyers-Rassias stabilities in the interval $ [0, +\infty]$, 
with the same assumption on the function $\mathcal{H}$. So we start with the following theorem:

\begin{theorem}Let $\xi_0 \in \Omega$ be fixed and let $\varepsilon > 0$ be a given positive number. Suppose that a continuous function $f:[0,+\infty) \to \Omega$ satisfies 
\begin{equation}\label{5.1}
\left|\left|t^{1-\gamma}( f(t) - \mathbb{S}_{\alpha,\beta}(t) \xi_0 - \int_0^t \mathbb{T}_{\alpha}(t-s) u(s) \mathcal{H}(s,f(s)) \, {\rm{d}}s) \right|\right| \leq \varepsilon
\end{equation}
for all $t \in [0,+\infty)$.

Suppose that 
\begin{equation}\label{5.2}
\widetilde{\lambda}_{\alpha,1-\gamma} = \displaystyle \sup_{t \geq 0}\, t^{1-\gamma} \,\int_0^t \ell(s) |u(s)| ||\mathbb{T}_{\alpha}(t-s)|| \, {\rm{d}}s < 1
\end{equation}
with $0 < \alpha \leq 1$ and $0 \leq \gamma \leq 1$.

Then, there exists a unique continuous function $v:[0,+\infty) \to \Omega$ such that
\begin{equation}\label{5.3}
v(t) = \mathbb{S}_{\alpha,\beta}(t) \xi_0 + \int_0^t u(s) \mathbb{T}_{\alpha}(t-s) \mathcal{H}(s,v(s))\, {\rm{d}}s \, , \quad t \in [0,+\infty)
\end{equation}
and 
\begin{equation}\label{5.4}
||t^{1-\gamma}\left(f(t) - v(t)\right) || \leq \frac{\varepsilon}{1-\widetilde{\lambda}_{\alpha,1-\gamma}} \, , \quad t \in [0,+\infty) \, \cdot
\end{equation}
\end{theorem}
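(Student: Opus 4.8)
The plan is to reproduce the contraction-mapping argument of Theorem 1, adapted to the unbounded interval $[0,+\infty)$, with the essential simplification that the contraction constant is already packaged inside $\widetilde{\lambda}_{\alpha,1-\gamma}$ through the supremum appearing in Eq.(\ref{5.2}). I would work in the space $C_{1-\gamma}([0,+\infty),\Omega)$ equipped with the metric ${\rm d}_{1-\gamma}(h,g)=\sup_{t\geq 0}\|t^{1-\gamma}(h(t)-g(t))\|$, and reintroduce the self-map $\Lambda$ from Eq.(\ref{eq5}), namely $(\Lambda h)(t)=\mathbb{S}_{\alpha,\beta}(t)\xi_0+\int_0^t \mathbb{T}_\alpha(t-s)u(s)\mathcal{H}(s,h(s))\,{\rm d}s$.

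The decisive step is the contraction estimate. For $h,g\in C_{1-\gamma}([0,+\infty),\Omega)$, the homogeneous term $\mathbb{S}_{\alpha,\beta}(t)\xi_0$ cancels in the difference, and the Lipschitz hypothesis Eq.(\ref{eq2}) on $\mathcal{H}$ gives $\|(\Lambda h)(t)-(\Lambda g)(t)\|\leq \int_0^t \|\mathbb{T}_\alpha(t-s)\|\,|u(s)|\,\ell(s)\,{\rm d}s\cdot {\rm d}_{1-\gamma}(h,g)$. Multiplying by $t^{1-\gamma}$ and taking the supremum over $t\geq 0$ reproduces exactly the quantity defined in Eq.(\ref{5.2}), so that ${\rm d}_{1-\gamma}(\Lambda h,\Lambda g)\leq \widetilde{\lambda}_{\alpha,1-\gamma}\,{\rm d}_{1-\gamma}(h,g)$. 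Since $\widetilde{\lambda}_{\alpha,1-\gamma}<1$ by hypothesis, $\Lambda$ is a contraction, and Banach's fixed point theorem produces a unique $v\in C_{1-\gamma}([0,+\infty),\Omega)$ with $v=\Lambda v$, which is precisely the mild solution satisfying Eq.(\ref{5.3}).

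For the stability estimate Eq.(\ref{5.4}), I would read Eq.(\ref{5.1}) as the statement ${\rm d}_{1-\gamma}(f,\Lambda f)\leq\varepsilon$. Then, using the triangle inequality together with the contraction property and $v=\Lambda v$, one obtains ${\rm d}_{1-\gamma}(f,v)\leq {\rm d}_{1-\gamma}(f,\Lambda f)+{\rm d}_{1-\gamma}(\Lambda f,\Lambda v)\leq \varepsilon+\widetilde{\lambda}_{\alpha,1-\gamma}\,{\rm d}_{1-\gamma}(f,v)$, and solving this inequality yields ${\rm d}_{1-\gamma}(f,v)\leq \varepsilon/(1-\widetilde{\lambda}_{\alpha,1-\gamma})$, which is exactly Eq.(\ref{5.4}).

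The main obstacle is not the contraction computation, which is formally identical to the finite-interval case, but the analytic care demanded by the unbounded domain. I would need to verify that $\Lambda$ maps $C_{1-\gamma}([0,+\infty),\Omega)$ into itself, i.e. that the weighted supremum of $\Lambda h$ remains finite and that $t^{1-\gamma}(\Lambda h)(t)$ is continuous up to $t=0$; here the finiteness built into Eq.(\ref{5.2}), the exponential bounds on $\mathbb{S}_{\alpha,\beta}$ and $\mathbb{T}_\alpha$, and the local integrability of $\ell$, $u$ and $\ell u$ are precisely what must be combined. One must also confirm that $(C_{1-\gamma}([0,+\infty),\Omega),{\rm d}_{1-\gamma})$ is a complete metric space before invoking the fixed point theorem. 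These well-definedness and completeness points are where I would concentrate the rigor.
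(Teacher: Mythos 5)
Your contraction estimate, your reading of Eq.(\ref{5.1}) as ${\rm d}_{1-\gamma}(f,\Lambda f)\leq\varepsilon$, and your triangle-inequality derivation of Eq.(\ref{5.4}) coincide with the paper's argument. The one point where you diverge --- and where you yourself admit the rigor is unresolved --- is the choice of ambient space, and this is not a cosmetic issue on $[0,+\infty)$. The definition of $C_{1-\gamma}(I,\Omega)$ only requires $t^{1-\gamma}\xi(t)$ to be continuous, which on an unbounded interval does not force $\sup_{t\geq 0}\|t^{1-\gamma}\xi(t)\|$ to be finite; consequently ${\rm d}_{1-\gamma}(h,g)=\sup_{t\geq 0}\|t^{1-\gamma}(h(t)-g(t))\|$ can equal $+\infty$ and is only an extended metric on the full space, so Banach's theorem does not apply there as stated. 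The paper sidesteps this by working instead in the set
\begin{equation*}
\widetilde{\cal E}_{f,1-\gamma}:=\Bigl\{ g\in C_{1-\gamma}([0,+\infty),\Omega):\ \sup_{t\geq 0}\|t^{1-\gamma}(g(t)-f(t))\|<+\infty\Bigr\},
\end{equation*}
on which all mutual distances are finite and which is complete; the hypothesis Eq.(\ref{5.1}) is precisely what guarantees $\Lambda(f)\in\widetilde{\cal E}_{f,1-\gamma}$, and one then checks $\Lambda$ maps this set into itself before invoking the fixed point theorem. You should adopt this restriction (or an equivalent device such as the fixed point alternative for generalized metric spaces): without it your claim of a \emph{unique} fixed point in all of $C_{1-\gamma}([0,+\infty),\Omega)$ is not justified, and the uniqueness you actually obtain is uniqueness within the class of functions at finite weighted-sup distance from $f$, which is all the theorem needs. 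With that correction your proof matches the paper's.
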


\begin{proof}

Consider that $\widetilde{\lambda}_{\alpha,1-\gamma} < 1$, $\varepsilon > 0$ be given and $f \in C_{1-\gamma}([0,+\infty),\Omega)$ satisfy the inequality {\rm{(\ref{5.1})}}. On the other hand, we consider the set $\widetilde{\cal E}_{f,1-\gamma}$, given by
\begin{equation}
\widetilde{\cal E}_{f,1-\gamma}:= \left\{ g \in C_{1-\gamma}([0,+\infty),\Omega); \,\, \displaystyle \sup_{t \geq 0} ||t^{1-\gamma}\left(g(t) - f(t)\right) || < + \infty \right\}.
\end{equation}

The set $\widetilde{\cal E}_{f,1-\gamma}$ is not empty, because it contains $f$ and $\Lambda(f)$. Now, consider the functions $h,g \in \widetilde{\cal E}_{f,1-\gamma}$, such that
\begin{equation*}
d_{1-\gamma}(h,g):= \displaystyle \sup_{t \geq 0} ||t^{1-\gamma}\left(h(t) - g(t)\right) ||.
\end{equation*}

Then, $d_{1-\gamma}$ is a distance and the metric space $(\widetilde{\cal E}_{f,1-\gamma},d_{1-\gamma})$ is complete. 

For any functions $h,g \in \widetilde{\cal E}_{f,1-\gamma}$, we get
\begin{equation*}
\begin{array}{rll}
||(\Lambda h)(t) - (\Lambda g)(t)|| & = & \displaystyle \left|\left| \int_0^t u(s) \mathbb{T}_{\alpha}(t-s) \left[ \mathcal{H}(s,h(s)) - \mathcal{H}(s,g(s)) \right]\, 
{\rm{d}}s \right|\right|\\
& \leq & \displaystyle  \int_0^t ||\mathbb{T}_{\alpha}(t-s)|||u(s)| ||\mathcal{H}(s,h(s)) - \mathcal{H}(s,g(s))|| \, {\rm{d}}s \\
& \leq & \left( \displaystyle  \int_0^t ||\mathbb{T}_{\alpha}(t-s)|| |u(s)| \ell(s) \, {\rm{d}}s \right) \, d_{1-\gamma}(h,g)\, , \quad t \in [0,+\infty).
\end{array}
\end{equation*}
This gives 
\begin{equation*}
||t^{1-\gamma} \left( (\Lambda h)(t) - (\Lambda g)(t)\right) || \leq  \widetilde{\lambda}_{\alpha,1-\gamma} d_{1-\gamma}(h,g)\, , \quad t \in [0,+\infty).
\end{equation*}
Therefore, we have
\begin{equation*}
d_{1-\gamma}(\Lambda h, \Lambda g) \leq \widetilde{\lambda}_{\alpha,1-\gamma} d_{1-\gamma}(h,g).
\end{equation*}

Moreover, it is easy to show that $\Lambda(h) \in \widetilde{\cal E}_{f,1-\gamma}$ for any function $h \in \widetilde{\cal E}_{f,1-\gamma}$. Thus, we have $\Lambda $ is a contraction in $(\widetilde{\cal E}_{f,1-\gamma},d_{1-\gamma})$. In this sense, by Banach's fixed point theorem, we have that there is only one element $v \in \widetilde{\cal E}_{f,1-\gamma}$ such that $v=\Lambda(v)$. By the triangle inequality, we get
\begin{equation*}
\begin{array}{rll}
d_{1-\gamma}(f,v) & \leq & \leq d_{1-\gamma}(f,\Lambda(f)) +  d_{1-\gamma}(\Lambda(f),\Lambda(v))\\ 
& \leq & \varepsilon + \widetilde{\lambda}_{\alpha,1-\gamma} d_{1-\gamma}(f,v) 
\end{array}
\end{equation*}
that implies
\begin{equation*}
d_{1-\gamma}(f,v) \leq \frac{\varepsilon}{1-\widetilde{\lambda}_{\alpha,1-\gamma}},
\end{equation*}
this is, \begin{equation}\label{5.5}
||t^{1-\gamma}\left( f(t) - v(t)\right) ||\leq c\, \varepsilon\, , \quad t \in [0,+\infty)
\end{equation}
where $c:=\dfrac{1}{1-\widetilde{\lambda}_{\alpha,1-\gamma}}$ .

The inequality {\rm{(\ref{5.5})}} shows that the {\rm{Eq.(\ref{2.1})}} is Ulam-Hyers stable.
\end{proof}

With the following result aimed at investigating the Ulam-Hyers-Rassias stability, complete the second main result of this paper.

\begin{theorem}
Let $\Omega$ be a Banach space, $(\mathbb{S}_{\alpha,\beta}(t))_{t \geq 0}$ be a $(\alpha,\beta)$-resolvent operator function on $\Omega$ and $\phi_0 \in \Omega$ be fixed. Let $K >0$ be given and $\phi:[0,+\infty) \to (0,+\infty)$ be a continuous function such that 
\begin{equation}\label{6.1}
\int_0^t \phi(s) \, {\rm{d}}s \leq K \, \phi(t)\, , \quad t \in [0,+\infty) \cdot
\end{equation}

Suppose that a continuous function $f:[0,+\infty) \to \Omega$ satisfies
\begin{equation}\label{6.2}
\left|\left| t^{1-\gamma}(f(t) - \mathbb{S}_{\alpha,\beta}(t) \xi_0 - \int_0^t u(s) \mathbb{T}_{\alpha}(t-s)  \mathcal{H}(s,f(s)) \, {\rm{d}}s) \right|\right| \leq \phi(t)
\end{equation}
for all $t \in [0,+\infty)$.

Suppose that there exists a positive constant $\rho > 0$ such that
\begin{equation}\label{6.3}
\ell(s) |u(s)| ||\mathbb{T}_{\alpha}(t-s)||\leq \rho
\end{equation}
for almost all $(s,t) \in [0,+\infty)$ with $0 \leq s \leq t$ and suppose that 
\begin{equation}\label{6.4}
\rho \,K \, T^{1-\gamma}  < 1 \cdot
\end{equation}

Then, there exists a unique continuous function $v:[0,+\infty) \to \omega$ such that
\begin{equation}\label{6.5}
v(t) = \mathbb{S}_{\alpha,\beta}(t) \xi_0 + \int_0^t u(s) \mathbb{T}_{\alpha}(t-s) \mathcal{H}(s,v(s))\, {\rm{d}}s \, , \quad t \in [0,+\infty)
\end{equation}
and 
\begin{equation}\label{6.6}
||t^{1-\gamma}\left( f(t) - v(t)\right) || \leq \frac{1}{1-K\rho} \phi (t) \, , \quad t \in [0,+\infty) \cdot
\end{equation}
\end{theorem}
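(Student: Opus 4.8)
The plan is to combine the two devices already deployed earlier in the paper: the $\phi$-weighted (Bielecki-type) metric $d_{\phi,1-\gamma}$ used in the proof of Theorem 3.2, and the restriction to functions lying at finite weighted distance from $f$ used in the proof of Theorem 4.1. Concretely, I would work with the operator
\begin{equation*}
(\Lambda h)(t) := \mathbb{S}_{\alpha,\beta}(t)\xi_0 + \int_0^t u(s)\,\mathbb{T}_{\alpha}(t-s)\,\mathcal{H}(s,h(s))\,{\rm d}s, \qquad t \in [0,+\infty),
\end{equation*}
on the set
\begin{equation*}
\widetilde{\mathcal{E}}_{f,1-\gamma} := \Big\{ g \in C_{1-\gamma}([0,+\infty),\Omega) : \ \exists\, C \geq 0,\ \|t^{1-\gamma}(g(t)-f(t))\| \leq C\,\phi(t),\ t \geq 0 \Big\},
\end{equation*}
endowed with the metric $d_{\phi,1-\gamma}(h,g) := \inf\{C \geq 0 : \|t^{1-\gamma}(h(t)-g(t))\| \leq C\,\phi(t),\ t\geq 0\}$. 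The use of this weighted metric is forced by the unbounded interval, where the weight $t^{1-\gamma}$ cannot be pulled out as a finite supremum.

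First I would verify that $(\widetilde{\mathcal{E}}_{f,1-\gamma}, d_{\phi,1-\gamma})$ is a complete metric space and that it is nonempty: it contains $f$, and it contains $\Lambda(f)$, since hypothesis \eqref{6.2} reads exactly as $\|t^{1-\gamma}(\Lambda(f)(t)-f(t))\| \leq \phi(t)$, i.e. $d_{\phi,1-\gamma}(f,\Lambda f) \leq 1$. That $\Lambda$ maps $\widetilde{\mathcal{E}}_{f,1-\gamma}$ into itself then follows from the triangle inequality $d_{\phi,1-\gamma}(\Lambda h,f) \leq d_{\phi,1-\gamma}(\Lambda h,\Lambda f) + d_{\phi,1-\gamma}(\Lambda f,f)$ together with the contraction estimate below.

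Next comes the contraction estimate. For $h,g$ with $\|s^{1-\gamma}(h(s)-g(s))\| \leq C\,\phi(s)$, the Lipschitz bound \eqref{eq2} and the uniform bound \eqref{6.3} give
\begin{equation*}
\|(\Lambda h)(t)-(\Lambda g)(t)\| \leq \int_0^t \ell(s)\,|u(s)|\,\|\mathbb{T}_{\alpha}(t-s)\|\,C\,\phi(s)\,{\rm d}s \leq C\,\rho \int_0^t \phi(s)\,{\rm d}s \leq C\,\rho\,K\,\phi(t),
\end{equation*}
the final step invoking the integral inequality \eqref{6.1}; multiplying by $t^{1-\gamma}$ yields $d_{\phi,1-\gamma}(\Lambda h,\Lambda g) \leq \rho K\, d_{\phi,1-\gamma}(h,g)$, a contraction under \eqref{6.4}. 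Banach's fixed point theorem then produces a unique $v \in \widetilde{\mathcal{E}}_{f,1-\gamma}$ with $v=\Lambda v$, which is the asserted solution \eqref{6.5}, and the triangle inequality
\begin{equation*}
d_{\phi,1-\gamma}(f,v) \leq d_{\phi,1-\gamma}(f,\Lambda f) + d_{\phi,1-\gamma}(\Lambda f,\Lambda v) \leq 1 + \rho K\, d_{\phi,1-\gamma}(f,v)
\end{equation*}
gives $d_{\phi,1-\gamma}(f,v) \leq 1/(1-\rho K)$, which unwinds directly to \eqref{6.6}.

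I expect the only genuine subtlety to be organizing the weighted metric so that the factor $t^{1-\gamma}$ is absorbed into $d_{\phi,1-\gamma}$ rather than estimated by a supremum: on $[0,+\infty)$ the quantity $\sup_{t\geq 0} t^{1-\gamma}$ is infinite, so a literal transcription of the $[0,T]$ argument breaks down, and the honest contraction constant is $\rho K$ (this is why the \emph{effective} smallness condition is $\rho K<1$, the factor $T^{1-\gamma}$ in \eqref{6.4} being an artifact carried over from the finite-interval case). Completeness of $(\widetilde{\mathcal{E}}_{f,1-\gamma}, d_{\phi,1-\gamma})$, the self-mapping property, and the continuity of the fixed point are then routine once the metric is in place.
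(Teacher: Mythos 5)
Your proposal reproduces the paper's own argument essentially step for step: the same set $\widetilde{\cal E}_{f,1-\gamma}$, the same weighted metric ${\rm d}_{\phi,1-\gamma}$, the same contraction estimate via the Lipschitz bound on $\mathcal{H}$, the pointwise bound $\ell(s)|u(s)|\,\|\mathbb{T}_{\alpha}(t-s)\|\leq\rho$ and the integral inequality $\int_0^t\phi(s)\,{\rm d}s\leq K\phi(t)$, followed by the same triangle-inequality bookkeeping. So there is no divergence of method to report.

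However, the step you flag as the ``only genuine subtlety'' is precisely where both your argument and the paper's have a gap. From $\|(\Lambda h)(t)-(\Lambda g)(t)\|\leq C(h,g)\,\rho\,K\,\phi(t)$ you cannot conclude ${\rm d}_{\phi,1-\gamma}(\Lambda h,\Lambda g)\leq\rho K\,C(h,g)$ by ``multiplying by $t^{1-\gamma}$'': that multiplication leaves
\begin{equation*}
\left\| t^{1-\gamma}\left((\Lambda h)(t)-(\Lambda g)(t)\right)\right\|\leq t^{1-\gamma}\,\rho\,K\,C(h,g)\,\phi(t),
\end{equation*}
and the factor $t^{1-\gamma}$ is unbounded on $[0,+\infty)$ when $\gamma<1$, so no finite constant $C'$ with $\|t^{1-\gamma}(\cdots)\|\leq C'\phi(t)$ for all $t\geq 0$ is obtained. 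The paper conceals the same defect by writing the constant as $\rho\,K\,T^{1-\gamma}$, even though $T$ does not exist on the infinite interval, so its smallness hypothesis and its final constant are not meaningful there; you correctly call $T^{1-\gamma}$ an artifact, but the honest conclusion is not that the contraction constant is $\rho K$ --- it is that the contraction in the \emph{weighted} metric fails as written. To repair it one must either (i) strengthen the hypothesis to $t^{1-\gamma}\,\ell(s)\,|u(s)|\,\|\mathbb{T}_{\alpha}(t-s)\|\leq\rho$ for $0\leq s\leq t$, or (ii) drop the weight and run the whole argument with the unweighted distance $\|h(t)-g(t)\|\leq C\phi(t)$, which yields $\|f(t)-v(t)\|\leq\frac{1}{1-K\rho}\,\phi(t)$ rather than the weighted conclusion, or (iii) restrict to $\gamma=1$. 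A secondary issue, also inherited from the paper, is your use of the Lipschitz condition in the form $\|\mathcal{H}(s,h(s))-\mathcal{H}(s,g(s))\|\leq\ell(s)\,C(h,g)\,\phi(s)$: this silently reads $\|h-g\|_{C_{1-\gamma}}$ as the pointwise quantity $\|s^{1-\gamma}(h(s)-g(s))\|$, which is the reading needed for the estimate to close and should be stated explicitly as the actual hypothesis on $\mathcal{H}$.
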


\begin{proof} Let $f \in C_{1-\gamma}([0,+\infty),\Omega)$ satisfy the inequality {\rm{(\ref{6.2})}} and the following set, defined by
\begin{equation*}
\widetilde{\cal E}_{f,1-\gamma}:= \left\{ g \in C_{1-\gamma}([0,+\infty),\Omega): \exists C \geq 0 : ||t^{1-\gamma}\left( g(t)-f(t)\right) || \leq C \phi(t) \, , \quad t \in [0,+\infty) \right\}.
\end{equation*}
The set $\widetilde{\cal E}_{f,1-\gamma}$ is not empty, because it contains $f$ and $\Lambda(f)$.  Now, for $h,g \in \widetilde{\cal E}_{f,1-\gamma}$, we define the following set
\begin{equation*}
d_{\phi,1-\gamma}(h,g):= \inf \left\{ C \in [0,+\infty): ||t^{1-\gamma}\left( h(t)-g(t)\right) || \leq C \phi(t)\, , \quad t \in [0,+\infty)  \right\}.
\end{equation*}

Note that it is easy to see that $(\widetilde{\cal E}_{f,1-\gamma},d_{\phi,1-\gamma})$ is a complete metric space satisfying $ \Lambda(\widetilde{\cal E}_{f,1-\gamma}) \subset  \widetilde{\cal E}_{f,1-\gamma}$, where $\Lambda : \widetilde{\cal E}_{f,1-\gamma} \rightarrow \widetilde{\cal E}_{f,1-\gamma}$ 
is defined by 
\begin{equation*}
(\Lambda h)(t):= \mathbb{S}_{\alpha,\beta}(t) \xi_0 + \int_0^t u(s) \mathbb{T}_{\alpha}(t-s) \mathcal{H}(s,h(s))\, {\rm{d}}s \, , \quad t \in [0,+\infty).
\end{equation*}

The idea is to prove that in fact the $\Lambda$ application is a contraction on the metric space $(\widetilde{\cal E}_{f,1-\gamma},{\rm{d}}_{\phi,1-\gamma})$. Then, let $h,g \in \widetilde{\cal E}_{f,1-\gamma}$ and $C(h,g) \in [0,+\infty)$ be an arbitrary constant such that
\begin{equation*}
||t^{1-\gamma}\left( h(t) - g(t)\right) || \leq C(h,g) \phi(t) \, , \quad t \in [0,+\infty).
\end{equation*}

In this sense, we have the following inequality
\begin{equation*}
\begin{array}{rll}
||(\Lambda h)(t) - (\Lambda g)(t)||& = & \displaystyle \left|\left| \int_0^t u(s) \mathbb{T}_{\alpha}(t-s) \left( \mathcal{H}(s,h(s)) - \mathcal{H}(s,g(s)) \right)\, 
{\rm{d}}s \right|\right|\\
& \leq & \displaystyle  \int_0^t |u(s)| ||\mathbb{T}_{\alpha}(t-s)||_{C_{1-\gamma}} ||\mathcal{H}(s,h(s)) - \mathcal{H}(s,g(s))|| \, {\rm{d}}s \\
&\leq & \displaystyle  \int_0^t |u(s)| ||\mathbb{T}_{\alpha}(t-s)||_{C_{1-\gamma}} \ell(s) ||h - g||_{C_{1-\gamma}} \, {\rm{d}}s \\
& \leq & \displaystyle  C(h,g) \int_0^t  |u(s)| \ell(s) ||\mathbb{T}_{\alpha}(t-s)|| \phi(s) \, {\rm{d}}s \\
& \leq & \displaystyle  \rho \,  C(h,g)  \int_0^t  \phi(s) \, {\rm{d}}s \\
& \leq & \rho \, C(h,g)\, K \phi(t) \, , \quad t \in [0,T] \cdot
\end{array}
\end{equation*}

Therefore, we have ${\rm{d}}_{\phi,1-\gamma}(\Lambda(h),\Lambda(g)) \leq C(h,g) \, T^{1-\gamma} \,\rho\, K $, that implies in
\begin{equation*}
{\rm{d}}_{\phi,1-\gamma}(\Lambda(h),\Lambda(g)) \leq T^{1-\gamma} \,\rho\, K {\rm{d}}_{\phi,1-\gamma}(h,g).
\end{equation*}

Using the fact that $T^{1-\gamma} \,\rho\, K < 1$, we get $\Lambda$ is strictly contractive on the $(\widetilde{\cal E}_{f,1-\gamma},{\rm{d}}_{\phi,1-\gamma})$. Thus, through the Banach fixed-point theorem, there is a unique function $v\in\widetilde{\cal E}_{f,1-\gamma}$ such that $v=\Lambda(v)$. Using the triangle inequality, we obtain
\begin{equation*}
\begin{array}{rll}
d_{\phi,1-\gamma}(f,v) & \leq & d_{\phi,1-\gamma}(f,\Lambda(f)) + d_{\phi,1-\gamma}(\Lambda(f),\Lambda(v))\\
& \leq & 1 + T^{1-\gamma} \,\rho\, K\, d_{\phi,1-\gamma}(f,v)
\end{array}
\end{equation*}
which implies that
\begin{equation*}
d_{\phi,1-\gamma} \leq \frac{1}{1-T^{1-\gamma} \,\rho\, K}.
\end{equation*}

Therefore, we conclude that
\begin{equation*}
||t^{1-\gamma} \left( f(t) - v(t)\right) ||\leq C_{\phi} \phi(t)\, , \quad t \in [0,+\infty)
\end{equation*}
where $C_{\phi}:= \dfrac{1}{1-T^{1-\gamma} \,\rho\, K}$.
\end{proof}

Remark 2. In the same way that we highlight the particular cases for Theorem 1 and Theorem 2, here also the observation made according to remark 1 is valid.


\section{Concluding remarks}
We conclude this paper with the objectives achieved, that is, we investigate the  Ulam-Hyers and Ulam-Hyers-Rassias stabilities for the mild solution of the fractional nonlinear abstract non-linear Cauchy problem: the first part was destined to the inite interval $ [0, T] $ and the second part to the red infinite interval $[0, \infty) $. It is important to emphasize the fundamental role of the Banach fixed point theorem in the results obtained.

Although, the results presented here, contribute to the growth of the theory; some questions still need to be answered. The first question is about the possibility of investigating the existence and uniqueness of mild solutions for fractional differential equations formulated via $\psi$-Hilfer fractional derivative. Consequently, the second allows us to question the Ulam-Hyers stabilities. But for such a success, it is necessary and sufficient condition to obtain a Laplace transform and inverse Laplace transform with respect to another function \cite{jarad2}.

Another important consequence that we can take, the mild solution part of a fractional problem, is to be able to investigate properties of Navier-Stokes equations \cite{neto,soj}. So this path is the next step in the research being developed.

\section*{Acknowledgment}

JVCS acknowledges the financial support of a PNPD-CAPES (process number nº88882.305834/2018-01) scholarship of the Postgraduate Program in Applied Mathematics of IMECC-Unicamp. The second author acknowledges the Science and Engineering Research Board (SERB), New Delhi, India for the Research Grant (Ref: File no. EEQ/2018/000407).


\end{document}